\theoremstyle{plain}
\newtheorem{theorem}{Theorem}
\newtheorem{prop}[theorem]{Proposition}
\newtheorem{lemma}[theorem]{Lemma}
\newtheorem{corollary}[theorem]{Corollary}
\theoremstyle{definition}
\newcommand{\C}{\mathbb{C}}
\newcommand{\R}{\mathbb{R}}
\newcommand{\Q}{\mathbb{Q}}
\newcommand{\Z}{\mathbb{Z}}
\let\Re\relax
\DeclareMathOperator{\Re}{Re}
\DeclareMathOperator{\Tr}{Tr}
\DeclareMathOperator{\Nm}{N}
\newcommand{\keywords}[1]{\noindent\textbf{Keywords:} #1}
\newcommand{\subjclass}[2]{\noindent\textbf{#1 Mathematics Subject Classification:} #2.}
\title{Partitions into powers of an algebraic number}
\author[1]{V\'{i}t\v{e}zslav Kala}
\author[2]{Mikul\'{a}\v{s} Zindulka}
\affil[1,2]{Charles University, Faculty of Mathematics and Physics, Department of Algebra, Sokolovsk\'{a} 83, 186 75 Praha 8, Czech Republic}
\affil[1]{Corresponding author. E-mail: vitezslav.kala@matfyz.cuni.cz. ORCID: 000-0001-5515-6801.}
\affil[2]{E-mail: mikulas.zindulka@matfyz.cuni.cz.}
\begin{document}
\maketitle

\begin{abstract}
We study partitions of complex numbers as sums of non-negative powers of a fixed algebraic number $ \beta $. We prove that if $ \beta $ is real quadratic, then the number of partitions is always finite if and only if some conjugate of $ \beta $ is larger than 1. Further, we show that for $ \beta $ satisfying a certain condition, the partition function attains all non-negative integers as values.

\medskip
\keywords{Partition, algebraic number, quadratic integer}

\medskip
\subjclass{2020}{11P81, 11P84, 11R11}

\end{abstract}

\section{Introduction}

Integer partitions are one of the central objects of study in additive number theory. While the most famous results build on the works of Hardy and Ramanujan and concern the number of partitions without any restrictions, it is also very natural to consider a more general situation.

For a fixed set $ S \subset \C$, one can investigate the properties of partitions of a complex number  into  parts from $ S $. When $ S $ is the set of non-negative powers of a fixed $ m \in \Z_{\geq 2} $, we obtain  the \emph{$ m $-ary partitions}. Thus, an $ m $-ary partition of $ n \in\Z_{\geq 1}$ is an expression of the form
\begin{equation*}
	n = a_jm^j+a_{j-1}m^{j-1}+\dots+a_1m+a_0,
\end{equation*}
where $ j $ and the $ a_i $ are non-negative integers (obviously, other numbers cannot be expressed in this form). The \emph{$ m $-ary partition function}, which counts the number of $ m $-ary partitions of $ n $, is denoted by $ b_m(n) $. Binary partitions (the case $ m = 2 $) were first studied by Euler~\cite{Euler1750}, who defined the binary partition function and computed its values for $ n \leq 37 $ (for more modern results, e.g., see the survey article~\cite{Reznick1990}).

\medskip

The broad goal of this short article is to open up the investigation of general partitions as an analogous, rich area of study. In particular, this will be interesting when $S$ is a subset of some number field $K$, and one considers partitions of elements of $K$.
Before delving into the specifics of our theorems, let us provide more context by discussing various relevant results that one may wish to generalize.

\medskip

The asymptotic behavior of the function $ b_m(n) $ for an integer $ m \geq 2 $ was investigated by Mahler~\cite{Mahler1940}, who proved the asymptotic equality
\begin{equation*}
	\log b_m(n) \sim \frac{(\log n)^2}{2\log m}.
\end{equation*}
(The standard analytic notation $ f \sim g $ means that the ratio of the two functions converges to $ 1 $.) It follows that $ b_m(n) $ grows roughly like $ \exp((\log n)^2) $, and its range is therefore a set of density zero. 

An analogous problem for an arbitrary real $ \beta>1 $ was considered by de Bruijn~\cite{Bruijn1948}, whose work was further improved by Pennington~\cite{Pennington1953}: If one defines $ P_\beta(x) $ to be the number of solutions of the inequality
\begin{equation*}
	a_j\beta^j+a_{j-1}\beta^{j-1}+\cdots+a_1\beta+a_0 < x
\end{equation*}
in non-negative integers, then we similarly have
\begin{equation*}
	\log\left(P_\beta(x)-P_\beta(x-1)\right) \sim \log P_\beta(x) \sim \frac{(\log x)^2}{2\log \beta}.
\end{equation*}

The function $ b_m $ satisfies the folklore recurrences (see for example~\cite{Churchhouse1969})
\begin{align*}
	b_m(nm)& = b_m(nm+1) = \cdots = b_m(nm+(m-1)),\\
	b_m(nm)& = b_m((n-1)m)+b_m(n).
\end{align*}

Another rich area of study are congruence properties of partition functions. In relation to the $ m $-ary partition function, let us just mention a result of Andrews, Fraenkel, and Sellers~\cite{Andrews2015} which characterizes the number of $m$-ary partitions modulo $ m $: If $ n = a_jm^j+\cdots+a_1m+a_0 $ is the base $ m $ representation of $ n $, then
\begin{equation*}
	b_m(mn) \equiv \prod_{i = 0}^j (a_i+1)\pmod{m}.
\end{equation*}
For further interesting results, see~\cite{Churchhouse1969}, \cite{Gupta1972}, \cite{Roedseth1970}, \cite{Rodseth2002}, \cite{Zmija2020}.

\medskip

In the present paper, we focus on partitions into non-negative powers of a fixed complex number $ \beta $. 
For $ \alpha \in \C $, we define $ p_\beta(\alpha) $ as the number of partitions of the form
\begin{equation*}
	\alpha = a_j\beta^j+a_{j-1}\beta^{j-1}+\cdots+a_1\beta+a_0, \text{\ \ \  \ \ \ 	where }  j, a_i \in\Z_{\geq 0}, \text{ and } a_j\neq 0,
\end{equation*} 
equivalently, as the number of polynomials $f(x)\in\Z_{\geq 0}[x]$ such that $f(\beta)=\alpha$. 

If $ \alpha $ cannot be expressed in this form, then $ p_\beta(\alpha) = 0 $. Let us stress that $ p_\beta(\alpha) $ may be equal to infinity (e.g., for $ \beta = 1, -2$, or  $\frac{1}{2} $). Note that this definition is interesting mostly if $ \beta $ is algebraic, for otherwise $ p_\beta(\alpha) \in\{ 0,1\} $ for every $\alpha$. Also when $\beta$ is an $n$th root of unity, then $\beta^n=1$, and so always $ p_\beta(\alpha) \in\{ 0,\infty\} $.
We thus often restrict to the case $ \beta \in \R\setminus\{-1, 0, 1\} $.

\medskip

A related notion is that of a \emph{beta-expansion}, except that one takes the ``digits" in a beta-expansion from a fixed finite alphabet $ \mathcal{A} $ rather than allowing them to be arbitrary non-negative integers (and that one sometimes allows also negative powers of $\beta$). Let $ \beta > 1 $ and let $ \mathcal{A}\subset \R $ be a finite set such that $ 0 \in \mathcal{A} $. The beta-expansion of a non-negative real number $ x $ in the alphabet $ \mathcal{A} $ is
\[
	x = x_j\beta^j+x_{j-1}\beta^{j-1}+\dots+x_1\beta+x_0+x_{-1}\beta^{-1}+x_{-2}\beta^{-2}+\dots,
\]
where $ j \in \Z $ and the $ x_i $ belong to $ \mathcal{A} $. One natural example is taking $ \mathcal{A} = \{0, 1, \dots, \lceil\beta\rceil-1\} $. Beta-expansions were introduced by R\'{e}nyi~\cite{Renyi1957} who considered only greedy expansions. A number usually has more than one expansion in a fixed alphabet, see, e.g.,~\cite{Sidorov2003}. For some further contemporary work on this subject, we refer the reader to~\cite{Frougny1992}, \cite{Pelantova2004b}, \cite{Kalle2012}, \cite{Kovacs1991}, \cite{Schmidt1980}.

Let us finally also mention two kinds of partitions in number fields. One can look at partitions of a totally positive algebraic integer $ \alpha $ as sums of totally positive algebraic integers. An asymptotic formula for the corresponding partition function was found by Meinardus~\cite{Meinardus1953} in the case of a real quadratic field and by Mitsui~\cite{Mitsui1978} in general. Recently, further properties \cite{SZ} and non-trivial identities for these partitions were discovered~\cite{JKK1, JKK2}. Another possibility is to look at partitions as sums of indecomposable elements~\cite{Dress1982}. 
The elements for which there exists exactly one partition of this form are precisely the \emph{uniquely decomposable} elements, which can be characterized in real quadratic fields~\cite{Hejda2020}.

\medskip

Now we return to discussing our results on the function $ p_\beta $. 
The first natural question is whether or not $ p_\beta $ attains only finite values. A sufficient condition for this to be the case is clearly $ \beta>1 $, but this condition is far from necessary. 
In fact, it is easy to observe that for $ \beta $ transcendental, $ p_\beta(\alpha) $ is always 0 or 1. Thus we can restrict ourselves to the case when $ \beta $ is an algebraic number. 
If we want $ p_\beta(\alpha) $ to be finite for every $ \alpha $, it is sufficient for one of its conjugates (i.e., roots of its minimal polynomial) to be greater than $ 1 $ (see Proposition~\ref{propFin}). This is obviously satisfied, e.g., when $ \beta $ is a totally positive algebraic integer (an algebraic number such that its minimal polynomial is monic and all of its conjugates are positive). Our first theorem shows that if $ \beta $ is a real root of a quadratic polynomial, this condition is also necessary.

\begin{theorem}
\label{thm1}
Let $ \beta $ be a real root of an irreducible quadratic polynomial with integer coefficients. Then $ p_\beta(\alpha) < \infty $ for every $ \alpha \in \R $ if and only if at least one of the conjugates $ \beta $ and $ \beta' $ is greater than $ 1 $.
\end{theorem}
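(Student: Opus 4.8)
The nontrivial content is the ``only if'' direction, since the converse is Proposition~\ref{propFin}. I will argue by contraposition: assuming that neither conjugate exceeds $1$, I will exhibit a single $\alpha$ with $p_\beta(\alpha)=\infty$. Write the (primitive) minimal polynomial as $m(x)=ax^2+bx+c$ with $a>0$. Since $\beta$ is irrational, neither root equals $1$, so the hypothesis ``neither $\beta$ nor $\beta'$ is $>1$'' means that both roots are $<1$; as $m$ opens upward, this is equivalent to the two numerical conditions $m(1)=a+b+c>0$ and $-b/(2a)<1$, i.e. $a+b+c>0$ and $2a+b>0$. The first of these is what will guarantee that the construction does not terminate. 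It is also convenient to record the reformulation $p_\beta(\alpha)=\#\{\,f\in\Z_{\ge 0}[x] : f\equiv f_0 \pmod{m}\,\}$, where $f_0$ is any fixed partition of $\alpha$; thus an infinite fiber is the same as a residue class modulo $m$ containing nonnegative representatives of arbitrarily large degree.

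The engine of the proof is a \emph{degree-raising ladder}. For every $k\ge 0$ the identity $\beta^k m(\beta)=0$ gives the relation $a\beta^{k+2}+b\beta^{k+1}+c\beta^{k}=0$; moving the negative-coefficient terms to the other side turns this into a value-preserving substitution that replaces some powers of $\beta$ by strictly higher ones, all with nonnegative integer multiplicities. Applying such a substitution to the top term of a partition strictly increases its degree, and if after each step the new top coefficient is still large enough to feed the next substitution, then iterating produces partitions of one fixed $\alpha$ of unbounded degree, whence $p_\beta(\alpha)=\infty$. The cleanest instance is $b<0,\ c>0,\ a+b\ge 0$, where $\beta^k m(\beta)=0$ reads
\begin{equation*}
 (-b)\,\beta^{k+2}=a\,\beta^{k+3}+c\,\beta^{k+1};
\end{equation*}
starting from the partition $(-b)x^2$ of $\alpha=(-b)\beta^2$ and repeatedly rewriting the top term produces exactly $a\ge -b$ copies at the new top degree at every stage, which is precisely enough to continue indefinitely.

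The remainder is a case analysis according to the signs of $b,c$, i.e. the positions of the two roots. When $c<0$ (roots of opposite sign) one rearranges $m(\beta)=0$ itself, as in $|c|\,\beta^{k}=a\,\beta^{k+2}+b\,\beta^{k+1}$ (with the evident variant when $b<0$), and the inequality $a+b+c>0$ forces production to dominate consumption so that the ladder is self-sustaining; a mild enlargement of the initial multiplicities, by taking $\alpha=N\beta^{s}$ for suitable $N$, gets it started. When both roots are negative ($b,c>0$) the monomials $\beta^{i}$ alternate in sign, and one runs the same kind of ladder in the resulting signed system. The genuinely delicate case is $b<0,\ c>0$ with $a+b<0$, i.e. both roots in $(0,1)$ with $\beta+\beta'>1$: here no single shift of $m$ is self-sustaining, and in fact $m$ has a positive real root, so it has no nonnegative-coefficient multiple whatsoever. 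The remedy is to combine several shifts, i.e. to choose an auxiliary $h\in\Z[x]$ so that $h(x)m(x)$ has a \emph{single} negative coefficient whose magnitude is matched (or dominated) by a higher-degree positive coefficient; for instance, for $m=16x^2-20x+5$ one takes $h=4x+1$, obtaining $hm=64x^3-64x^2+5$ and hence the self-sustaining relation $64\beta^2=64\beta^3+5$.

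I expect the main obstacle to be exactly this last point: showing, uniformly across all sign configurations compatible with $a+b+c>0$ and $2a+b>0$, that the multiplier $h$ (equivalently, the combination of shifts of $m$) can always be chosen so that the resulting substitution simultaneously preserves nonnegativity and strictly raises the degree at \emph{every} step, never stalling. The bookkeeping should ultimately translate ``the ladder never terminates'' back into the two defining inequalities for ``both roots $<1$'', and the hardest single case ought to be the one with both roots in $(0,1)$ and $\beta+\beta'>1$, where the absence of any nonnegative multiple of $m$ means that the matched-coefficient relation must be engineered explicitly rather than read off a single shift.
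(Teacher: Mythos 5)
Your strategy is essentially the paper's own: prove the contrapositive, translate ``both conjugates $<1$'' into the coefficient inequalities $2a+b>0$ and $a+b+c>0$ (irreducibility excludes $a+b+c=0$, since then the discriminant equals $(2a+b)^2$), and then run a sign case analysis on $b,c$, producing in each case an explicit $\alpha$ with infinitely many partitions by degree-raising substitutions. Your easy cases match the paper's proof of Theorem~\ref{thm5}; in fact when $b,c>0$ you are overcomplicating matters, since $a\beta^2+b\beta+c=0$ is already a nontrivial partition of $0$ with non-negative coefficients, so $p_\beta(0)=\infty$ with no ladder needed.

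The genuine gap is the case you yourself flag as the main obstacle: $b<0$, $c>0$, i.e.\ both conjugates positive, and in particular the subcase $a+b<0$, where both roots lie in $(0,1)$ but their sum exceeds $1$. There you exhibit only one numerical example ($h=4x+1$ for $m=16x^2-20x+5$) and assert that a suitable multiplier ``can always be chosen''; that assertion is precisely the technical heart of the whole proof and is not routine. The paper fills it with Lemma~\ref{lemmaBetaTotPos}: writing $m(x)=Ax^2-Ex+C$ with $A,E,C>0$, $2A>E$, $A-E+C>0$, choose $i_0\geq 2$ with $Ai_0\geq Ei_0-C(i_0-1)$ (possible since $E-C<A$), and for each $n\geq i_0$ take the multiplier $P(x)=\sum_{i=0}^{n}a_ix^i$ with a ``ramp--plateau'' coefficient profile: $a_i=n-i+1$ for $n-i_0+1\leq i\leq n$ and $a_i=i_0$ for $0\leq i\leq n-i_0$. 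One checks that every coefficient of $P(x)m(x)$ is non-negative except the coefficient of $x$, which equals $(C-E)i_0<0$ (here $C<E/2$ follows from the positive discriminant); the interior ramp coefficients equal $(A-E+C)(n-i+1)+(2A-E)>0$, which is exactly where both hypotheses enter. Hence $P(x)m(x)+(E-C)i_0\,x$ has non-negative coefficients and positive leading coefficient, i.e.\ it encodes a partition of $(E-C)i_0\,\beta$ of degree $n+2$, and letting $n$ vary gives infinitely many partitions of this single number. Note that this construction covers the whole case $b<0$, $c>0$ uniformly (no need to split on the sign of $a+b$), and that it uses a family of multipliers of growing degree rather than iterating one fixed self-sustaining relation; if you want to keep your iteration picture, you must still prove that a relation with a single dominated negative coefficient exists for every such $m$, and that existence statement is exactly the lemma your proposal is missing.
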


What else can we say about the range of the function $ p_\beta $? The answer, under some additional constraints, is provided by our second theorem. Let $ K = \Q(\sqrt{D}) $, where $ D>0 $ is a squarefree integer, be a real quadratic field. We define the \emph{trace} and \emph{norm} of an element $ \beta \in K $ as $ \Tr\beta = \beta+\beta' $ and $ \Nm\beta = \beta\beta' $, where $ \beta' $ denotes the conjugate of $ \beta $. Recall that $ \beta $ is \emph{totally positive} if both $ \beta $ and $ \beta' $ are positive. Finally, if $ \beta $ is the root of a monic irreducible quadratic polynomial over $ \Z $, it is called a \emph{quadratic integer}.

\begin{theorem}
\label{thm2}
Let $ \beta $ be a totally positive quadratic integer such that $ \beta > 1 $ and $ \beta' > 1 $. For every integer $ n \geq 0 $,
\begin{equation}
\label{eqPartTrace}
	p_\beta((\Tr\beta)\beta^n) \geq n+1.
\end{equation}
Equality in~\eqref{eqPartTrace} holds if and only if $ \beta < 2 $ or $ \beta' < 2 $, or $ n \in \{0, 1\} $, or $ n = 2 $ and $ \Nm\beta < 2\Tr\beta $.
\end{theorem}

It is not difficult to show that there exist infinitely many totally positive integral elements $ \beta $ of the field $ K $ satisfying $ \beta > 2 > \beta' > 1 $. Therefore, we obtain the following corollary which is proved in Section \ref{s:3}.

\begin{corollary}
\label{cor3}
In every real quadratic field $ K = \Q(\sqrt{D}) $, there exist infinitely many $ \beta\in \Z[\sqrt{D}] $ such that $ p_\beta $ attains all non-negative integer values.
\end{corollary}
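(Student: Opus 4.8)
The plan is to derive the corollary directly from Theorem~\ref{thm2}. For this it suffices to produce, in each real quadratic field $K=\Q(\sqrt D)$, infinitely many totally positive quadratic integers $\beta$ satisfying~\eqref{eqCondTrace}. Indeed, for any single such $\beta$, Theorem~\ref{thm2} yields $p_\beta((\Tr\beta)\beta^n)=n+1$ for all $n\geq 0$, so every positive integer occurs as a value of $p_\beta$; moreover, since $\beta>0$ and all coefficients in a partition are non-negative, every representable number is positive, whence $p_\beta(-1)=0$ and the value $0$ also occurs. Thus each such $\beta$ already has $p_\beta$ attaining all of $\Z_{\geq 0}$, and the content of the corollary reduces to the infinite count of admissible $\beta$.

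To make the inequality~\eqref{eqCondTrace} easy to control, I would restrict to $\beta$ with $\Tr\beta=\Nm\beta=:t$; then~\eqref{eqCondTrace} collapses to $t<2t-4$, i.e.\ $t>4$. Such a $\beta$ is a root of $x^2-tx+t$, with discriminant $t^2-4t=(t-2)^2-4$, and the requirement $\beta\in\Q(\sqrt D)$ becomes $t^2-4t=Dk^2$ for some positive integer $k$. Writing $u=t-2$, this is exactly the Pell-type equation $u^2-Dk^2=4$, so the whole construction rests on finding infinitely many of its solutions with $u>2$.

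These are immediate from the classical Pell equation: as $(x,y)$ ranges over the infinitely many positive integer solutions of $x^2-Dy^2=1$, the pairs $(u,k)=(2x,2y)$ satisfy $u^2-Dk^2=4$ with $u=2x\geq 4$ and $u\to\infty$. For each, set $t=u+2$ and $\beta=\frac{t+k\sqrt D}{2}$. It then remains to check that $\beta$ is a totally positive quadratic integer obeying~\eqref{eqCondTrace}: its minimal polynomial $x^2-tx+t$ lies in $\Z[x]$ and is irreducible because $t^2-4t=Dk^2$ is not a perfect square ($D>1$ is squarefree), while $\Tr\beta=\Nm\beta=t>0$ forces both conjugates positive, and $t>4$ gives~\eqref{eqCondTrace}. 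Distinct Pell solutions give strictly increasing traces $t$, hence infinitely many distinct $\beta$.

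The only substantive ingredient here is the infinitude of solutions of Pell's equation; everything else is routine verification. The one step I would be careful to spell out is that $0$ belongs to the range of $p_\beta$, since Theorem~\ref{thm2} on its own produces only the positive values $n+1$.
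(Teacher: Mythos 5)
Your proof is correct and follows essentially the same route as the paper: both reduce condition~\eqref{eqCondTrace} to Pell's equation $x^2-Dy^2=1$, and indeed your elements $\beta=(x+1)+y\sqrt{D}$ (with $\Tr\beta=\Nm\beta$) are exactly the ones the paper constructs. Your explicit remark that $0$ lies in the range (via $p_\beta(-1)=0$) is a small point the paper leaves implicit, but otherwise the arguments coincide.
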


We conclude the paper with several open questions. In particular, 
we do not know of any interesting congruence properties of $ p_\beta $ similar to those for the $ m $-ary partition function in the case when $ \beta $ is not in $ \Z $.

\section*{Acknowledgments}

We thank Jakub Krásenský, Zuzana Mas\'{a}kov\'{a}, and Edita Pelantov\'{a} for helpful discussions, and to Shigeki Akiyama and Artūras Dubickas for suggestions of several useful references. Finally, we thank the anonymous referees for important suggestions and corrections, especially for a simplification of Lemma~\ref{lemmaBetaTotPos} and its proof.

\section{Finiteness of the partition function}\label{s:2}
Let us start by noting that the function $ p_\beta $ satisfies a recurrence relation of sorts.
\begin{prop}
\label{propRec}
If $ \beta \in \C $, then
\[
	p_\beta(\beta\alpha) = p_\beta(\beta\alpha-1)+p_\beta(\alpha)
\]
for every $ \alpha \in \C $.
\end{prop}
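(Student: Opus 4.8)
The plan is to prove the identity by a direct bijective decomposition of the set of representations, using the polynomial formulation of $ p_\beta $: for $ \gamma \in \C $, the quantity $ p_\beta(\gamma) $ is the number of polynomials $ f \in \Z_{\geq 0}[x] $ with $ f(\beta) = \gamma $, where we count the zero polynomial as the (empty) representation of $ 0 $. Writing $ \gamma = \beta\alpha $, I would split the set $ \{f \in \Z_{\geq 0}[x] : f(\beta) = \beta\alpha\} $ into two disjoint classes according to the constant term $ a_0 = f(0) $: those with $ a_0 = 0 $ and those with $ a_0 \geq 1 $. Since every such $ f $ falls into exactly one class, it suffices to count each class separately and to check that the two counts are $ p_\beta(\alpha) $ and $ p_\beta(\beta\alpha - 1) $.

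For the first class I would factor out the variable: a polynomial $ f $ with zero constant term can be written uniquely as $ f(x) = x\,g(x) $ with $ g \in \Z_{\geq 0}[x] $, and then $ f(\beta) = \beta\, g(\beta) $. Assuming $ \beta \neq 0 $ (the case $ \beta = 0 $ being degenerate), the equation $ \beta\, g(\beta) = \beta\alpha $ is equivalent to $ g(\beta) = \alpha $, so $ f \mapsto g $ is a bijection between this class and the representations of $ \alpha $; hence the first class contributes exactly $ p_\beta(\alpha) $. For the second class I would use the shift $ f \mapsto f - 1 $, which subtracts $ 1 $ from the (positive) constant term and leaves all other coefficients unchanged. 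This produces a polynomial $ h \in \Z_{\geq 0}[x] $ with $ h(\beta) = f(\beta) - 1 = \beta\alpha - 1 $, and the inverse map $ h \mapsto h + 1 $ shows that it is a bijection onto $ \{h \in \Z_{\geq 0}[x] : h(\beta) = \beta\alpha - 1\} $; hence this class contributes $ p_\beta(\beta\alpha - 1) $.

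Adding the two disjoint contributions yields $ p_\beta(\beta\alpha) = p_\beta(\alpha) + p_\beta(\beta\alpha - 1) $, as claimed; the identity is to be read in $ \Z_{\geq 0} \cup \{\infty\} $, so it remains valid even when some of the values are infinite. The argument is genuinely elementary, so I do not expect a serious obstacle; the two points requiring care are that the factoring step in the first class forces the harmless assumption $ \beta \neq 0 $, and that the zero-polynomial convention must be handled consistently across both classes. Indeed, the shift map sends the representation $ f = 1 $ to the empty representation $ h = 0 $ of $ 0 $ precisely when $ \beta\alpha = 1 $, which is exactly why counting the empty partition in $ p_\beta(0) $ is the right normalization for the identity to hold on the nose.
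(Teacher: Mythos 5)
Your proof is correct and takes essentially the same route as the paper's: the identical bijective split of the representations of $\beta\alpha$ according to whether the constant term is zero (factor out $x$ and divide by $\beta$) or positive (subtract $1$). Your explicit attention to the two edge points --- counting the zero polynomial as the empty representation of $0$, and the assumption $\beta \neq 0$ needed for the division step --- is in fact more careful than the paper's own proof, which uses both silently even though its statement is phrased for all $\beta \in \C$.
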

\begin{proof}
We give a simple bijective proof. Consider an arbitrary partition
\[
	\beta\alpha = a_j\beta^j+a_{j-1}\beta^{j-1}+\dots+a_1\beta+a_0
\]
of $ \beta\alpha $. If $ a_0 > 0 $, then it corresponds to the partition of $ \beta\alpha-1 $ obtained by subtracting $ 1 $ from $ a_0 $. If $ a_0 = 0 $, then it corresponds to the partition
\[
	\alpha = a_j\beta^{j-1}+a_{j-2}\beta^{j-2}+\dots+a_2\beta+a_1.\qedhere
\]
\end{proof}

We continue by establishing a general sufficient condition on finiteness of values of $p_\beta$.

\begin{prop}
\label{propFin}
Let $ \beta $ be an algebraic number. If one of the conjugates of $ \beta $ is greater than $ 1 $, then $ p_\beta(\alpha) < \infty $ for every $ \alpha \in \R $.
\end{prop}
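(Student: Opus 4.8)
The plan is to exploit the field embedding that sends $\beta$ to its large conjugate. Let $\gamma > 1$ be a conjugate of $\beta$, i.e.\ a root of the minimal polynomial of $\beta$ satisfying $\gamma > 1$. Since the conjugates of $\beta$ correspond to the $\Q$-embeddings of $\Q(\beta)$ into $\C$, there is an embedding $\sigma \colon \Q(\beta) \hookrightarrow \R$ with $\sigma(\beta) = \gamma$ (its image lies in $\R$ because $\gamma$ is real). If $p_\beta(\alpha) = 0$ there is nothing to prove, so I would assume $\alpha$ admits at least one partition; in particular $\alpha \in \Z[\beta] \subseteq \Q(\beta)$, so $\sigma(\alpha)$ is defined and equals a fixed real number $c := \sigma(\alpha)$ that does not depend on any partition.

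Next I would observe that applying $\sigma$ rigidly constrains every partition. A partition of $\alpha$ corresponds to a polynomial $f(x) = a_jx^j + \cdots + a_1 x + a_0 \in \Z_{\geq 0}[x]$ with $f(\beta) = \alpha$. Applying $\sigma$ yields $f(\gamma) = \sigma(f(\beta)) = \sigma(\alpha) = c$, so each partition of $\alpha$ produces a representation of the \emph{single} fixed number $c$ as a non-negative integer combination of powers of $\gamma$. This is the crucial step: the value $c$ depends only on $\alpha$, while the bounds it imposes hold uniformly across all partitions.

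The remaining estimates are then elementary, precisely because $\gamma > 1$ and all coefficients are non-negative. From $f(\gamma) = c$ and $a_i\gamma^i \geq 0$ one gets $a_i\gamma^i \leq c$ for every $i$; since $\gamma^i \geq 1$ this forces $0 \leq a_i \leq c$, bounding each coefficient. Moreover $a_j \geq 1$ gives $\gamma^j \leq a_j\gamma^j \leq c$, hence $j \leq \log c / \log \gamma$, bounding the degree (note $c \geq \gamma^j \geq 1$, so this logarithm is harmless). Therefore only finitely many polynomials $f$ can arise, and $p_\beta(\alpha) < \infty$.

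I expect the only delicate point to be the conceptual one in the second paragraph: recognizing that passing through the embedding $\sigma$ converts every partition of $\alpha$ into a bounded representation of the fixed real number $\sigma(\alpha)$. Everything after that is a routine positivity argument that works exactly because the chosen conjugate $\gamma$ exceeds $1$.
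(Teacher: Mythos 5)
Your proof is correct and takes essentially the same approach as the paper: both apply the embedding sending $\beta$ to its conjugate $\gamma>1$ (the paper writes $\beta\mapsto\beta'$), turning every partition of $\alpha$ into a representation of the single fixed number $\sigma(\alpha)$ in non-negative powers of $\gamma$, and then conclude finiteness from $\gamma>1$. The only difference is that you spell out the coefficient and degree bounds explicitly, whereas the paper treats that final step as an evident remark.
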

\begin{proof}
Assume that $ \alpha $ is of the form $ \alpha = a_j\beta^j+a_{j-1}\beta^{j-1}+\dots+a_1\beta+a_0 $, where the $ a_i $ are non-negative integers (if $ \alpha $ is not of this form, then the number of partitions is zero). 

Let $ \beta' $ be the conjugate of $ \beta $ greater than $ 1 $ and consider $ \alpha' = a_j(\beta')^j+a_{j-1}(\beta')^{j-1}+\dots+a_1\beta'+a_0 $. To every partition of $ \alpha $ into powers of $ \beta $ thus corresponds a partition of $ \alpha' $ obtained by replacing $ \beta $ by $ \beta' $. 

But since $ \beta' > 1 $, there are only finitely many partitions of $ \alpha' $ into powers of $ \beta' $. The proposition follows.
\end{proof}

Observe that if $0$ has a non-trivial partition $0=a_j\beta^j+a_{j-1}\beta^{j-1}+\dots+a_1\beta+a_0 $ (i.e., if $\beta$ is the root of some polynomial with non-negative coefficients), then we can repeatedly add this equality to any other partition, showing that $p_\beta(\alpha)\in\{0,\infty\}$ for every $\alpha$.
Such elements $\beta$ were characterized by Dubickas \cite{Du} -- they are precisely those algebraic numbers such that none of their conjugates is a non-negative real number (for more information, see also \cite{Ak, Br, Ha}).

\medskip

In the rest of the paper, $ \beta $ is a root of a quadratic polynomial $ Ax^2+Bx+C $ with integer coefficients. We always assume that $ A > 0 $ and that the polynomial is primitive. By Gauss's lemma, it is irreducible in $ \Q[x] $ if and only if it is irreducible in $ \Z[x] $. If this is the case, we will simply say that it is irreducible.

Everywhere except for Proposition~\ref{propBetaComplex} at the end of this section, $ \beta $ is assumed to be real, which means that the discriminant $ \Delta = B^2-4AC $ is positive. Of course, if $ \beta $ is irrational, then $ \Tr\beta = -B/A $ and $ \Nm\beta = C/A $.

\begin{lemma}
\label{lemmaBetaTotPos}
Let $ \beta $ be a real root of an irreducible polynomial $ Ax^2+Bx+C $, $A,B,C\in\Z$. If
\[
	A > 0,\ \  2A+B > 0, \text{ and } A+B+C>0,
\]
then there is $\alpha\in\R$ such that $ p_\beta(\alpha)=\infty $.
\end{lemma}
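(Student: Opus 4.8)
The plan is to exhibit a single real number $\alpha$ together with an explicit infinite family of distinct partitions of it. First I would unwind the hypotheses: since $A>0$, the conditions $C>0$, $2A+B>0$ and $A+B+C>0$ say exactly that $Ax^2+Bx+C$ is positive at $0$ and at $1$ and has its vertex in $(0,1)$, so both roots lie in $(0,1)$; together with $B<0$ this gives $0<\beta'<\beta<1$. Writing $b=-B>0$, the minimal polynomial reads $m(x)=Ax^2-bx+C$ with $A,b,C>0$, and the constraints become $b<2A$ and $b<A+C$, while positivity of the discriminant ($b^2>4AC$) additionally forces $C<A$ and $b>2C$. The structural fact I will use is that a partition of $\alpha$ is a polynomial $R\in\Z_{\ge 0}[x]$ with $R(\beta)=\alpha$, and that $R(\beta)$ is unchanged if we add to $R$ any integer multiple $P(x)m(x)$, since $m(\beta)=0$. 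So it suffices to fix one non-negative polynomial $R_0$ and produce infinitely many $P\in\Z[x]$ for which $R_0+Pm$ still has non-negative coefficients; the degrees will be forced to grow, guaranteeing distinctness.

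Next I would build the multipliers $P=P_N$ greedily from the top. The coefficient of $x^k$ in $Pm$ equals $A p_{k-2}-b p_{k-1}+C p_k$, where the $p_i$ are the coefficients of $P$. Fixing a large degree $N$, I set $p_N=1$ and $p_{N-1}=\lceil b/A\rceil$, and then define $p_i$ downward by taking it as small as possible (and $\ge 0$) so that the coefficient of $x^{i+2}$ is non-negative, i.e. $p_i=\max\bigl(0,\lceil (bp_{i+1}-Cp_{i+2})/A\rceil\bigr)$. By construction every coefficient of $P_Nm$ at $x^2,\dots,x^{N+2}$ is non-negative: the top one is $Ap_N=A>0$, the next is $Ap_{N-1}-bp_N=A\lceil b/A\rceil-b\ge 0$ using $b<2A$, and the rest are forced non-negative by the recurrence. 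The only coefficients that can still be negative are at $x^0$, which is $Cp_0\ge 0$ anyway, and at $x^1$, which equals $Cp_1-bp_0$.

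The heart of the argument is a uniform bound $p_i\le M_0$ with $M_0$ independent of $N$. I would obtain this by comparing the integer recurrence with its exact real counterpart $r_j$ defined by $Ar_j=br_{j-1}-Cr_{j-2}$ with $r_0=1$, $r_1=b/A$; its characteristic equation is $At^2-bt+C=0$, so $r_j=(\beta^{j+1}-(\beta')^{j+1})/(\beta-\beta')$, which is positive, decreasing to $0$, and bounded. The rounding errors $\delta_i\in[0,1)$ introduced by the ceilings satisfy the same recurrence, whose fundamental solution decays geometrically (the roots $\beta,\beta'$ lie in $(0,1)$), so the accumulated error is bounded by a convergent geometric series; the clamping at $0$ only decreases terms and cannot spoil the upper bound. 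Hence $p_i\le M_0$ uniformly, and in particular $|Cp_1-bp_0|\le bM_0$. I then set $K=bM_0$, take $R_0=Kx$, and put $R_N=R_0+P_Nm$: every coefficient is non-negative (the deficit $Cp_1-bp_0$ at $x^1$ is absorbed by the $K$ copies of $\beta$ in $R_0$), $R_N(\beta)=K\beta=:\alpha$ for all $N$, and $\deg R_N=N+2$, so the $R_N$ are pairwise distinct. Therefore $p_\beta(K\beta)=\infty$.

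I expect the uniform boundedness of the greedy coefficient sequence to be the main obstacle: the recurrence carries the amplifying factor $b/A>1$ in the regime $A<b$ (which genuinely occurs, e.g. for $7x^2-8x+2$), so one must argue that the contraction coming from $\beta,\beta'\in(0,1)$ dominates the accumulated ceiling errors, rather than iterating crude bounds. In the complementary regime $A\ge b$ the sequence is identically $1$ and the construction collapses to the transparent identity $b\beta=A\beta^2+C$ summed over the geometric block $1+x+\cdots+x^N$, so the whole difficulty is concentrated in controlling this recurrence when $A<b$.
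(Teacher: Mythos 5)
Your overall strategy is sound and it is genuinely different from the paper's. Both proofs rest on the same reduction: a new partition of $\alpha$ amounts to a multiplier $P\in\Z[x]$ such that $P(x)(Ax^2+Bx+C)$ has non-negative coefficients except for a controlled negative coefficient at $x^1$, which is absorbed by taking $\alpha=K\beta$ for a suitable positive integer $K$. The paper then simply writes the coefficients of $P$ in closed form --- a linear ramp $1,2,\dots,i_0$ followed by a constant plateau $i_0$, with $i_0$ chosen using $A+B+C>0$ --- and checks the sign conditions by direct computation, uniformly in both regimes $A\geq b$ and $A<b$. You instead generate the coefficients greedily from the top and prove a uniform bound via the linear recurrence $Aq_j=bq_{j-1}-Cq_{j-2}$, whose characteristic roots are exactly $\beta,\beta'\in(0,1)$. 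Your route is more conceptual: it explains \emph{why} boundedness holds (the Green's function $u_i=(\beta^{i+1}-(\beta')^{i+1})/(\beta-\beta')$ is non-negative and summable, indeed $\sum_{i\geq 0}u_i=\frac{1}{(1-\beta)(1-\beta')}=\frac{A}{A+B+C}$, so the hypothesis $A+B+C>0$ enters precisely where it should), but it requires genuine analysis where the paper needs only a finite verification.

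There is one incorrect step: the claim that ``the clamping at $0$ only decreases terms and cannot spoil the upper bound.'' Clamping replaces a negative proposed value by $0$, so it \emph{increases} the clamped term, and since the Green's function is non-negative this increase propagates into all later terms. Worse, a clamping error is not bounded by $1$ but only by $(C/A)q_{j-2}$, so the self-consistent bound it forces is $M\cdot\frac{A+B}{A+B+C}\geq\max_j r_j$, which is vacuous exactly in the hard regime $A<b$ (your own example $7x^2-8x+2$), since there $A+B<0$; the geometric-series argument as stated does not close. Fortunately the repair is short. If clamping first occurs at step $j_0$, then $q_{j_0}=0$ and $q_{j_0+1}=\max\bigl(0,\lceil -Cq_{j_0-1}/A\rceil\bigr)=0$, hence $q_j=0$ for all $j\geq j_0$, and the sequence is trivially bounded in this case as well. (In fact this case never occurs: it would make every coefficient of $P_Nm$ non-negative, including those at $x^1$ and $x^0$, producing a nonzero polynomial in $\Z_{\geq 0}[x]$ vanishing at $\beta>0$, which is absurd.) Once clamping is disposed of, every error lies in $[0,1)$, your comparison argument yields the uniform bound $M_0$, and the remainder of your proof is correct.
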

\begin{proof}
Let $ i \geq 2 $ be such that $ (A+B+C)i \geq C $. We show that if $ \alpha = -(B\beta+C\beta+C)i $, then $ p_\beta(\alpha) = \infty $. For $ n > i $, we have
\begin{align*}
	\alpha=&-(B\beta+C\beta+C)i = (A\beta^2+B\beta+C)\left(\sum_{k=1}^{i-1}k\beta^{n-k}+\sum_{k=i}^n i\beta^{n-k}\right)-(B\beta+C\beta+C)i\\
	=& \sum_{k=1}^{i-1}\left(Ak\beta^{n-k+2}+Bk\beta^{n-k+1}+Ck\beta^{n-k}\right)+\sum_{k=i}^n\left(Ai\beta^{n-k+2}+Bi\beta^{n-k+1}+Ci\beta^{n-k}\right)-(B\beta+C\beta+C)i\\
	=& A\beta^{n+1}+\sum_{k=0}^{i-3}A(k+2)\beta^{n-k}+\sum_{k=0}^{i-2}B(k+1)\beta^{n-k}+\sum_{k=0}^{i-1}Ck\beta^{n-k}\\
	&+ \sum_{k=i-2}^{n-2} Ai\beta^{n-k}+\sum_{k=i-1}^{n-1} Bi\beta^{n-k}+\sum_{k=i}^n Ci\beta^{n-k}-(B\beta+C\beta+C)i\\
	=& A\beta^{n+1}+\sum_{k=0}^{i-3}(A(k+2)+B(k+1)+Ck)\beta^{n-k}+(Ai+B(i-2+1)+C(i-2))\beta^{n-(i-2)}\\
	&+(Ai+Bi+C(i-1))\beta^{n-(i-1)}+\sum_{k=i}^{n-2}(A+B+C)i\beta^{n-k}\\
	=& A\beta^{n+1}+\sum_{k=0}^{i-2}((A+B+C)k+(2A+B))\beta^{n-k}+((A+B+C)i-C)\beta^{n-(i-1)}+\sum_{k=i}^{n-2}(A+B+C)i\beta^{n-k}.
\end{align*}
The coefficients in the last expression are positive, hence it is a partition of $ \alpha $.
\end{proof}

We are now ready to establish the following result that in particular includes Theorem~\ref{thm1} as the equivalence of i) and iii).
\begin{theorem}
\label{thm5}
Let $ \beta $ be a real root of an irreducible polynomial $ Ax^2+Bx+C $
with $A,B,C\in\Z$, $ A > 0 $. Then the following are equivalent:
\begin{enumerate}[i)]
    \item $ p_\beta(\alpha) < \infty $ for every $ \alpha \in \R $,
    \item $ 2A+B \leq 0 $ or $ A+B+C < 0 $,
    \item at least one of the conjugates $ \beta $ and $ \beta' $ is greater than 1.
\end{enumerate}
\end{theorem}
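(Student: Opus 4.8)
The plan is to prove the cycle of implications i) $\Rightarrow$ ii) $\Rightarrow$ iii) $\Rightarrow$ i), treating the equivalence of ii) and iii) as an elementary question about the location of the two real roots of $Ax^2+Bx+C$ relative to $1$, and reserving the genuine work for i) $\Rightarrow$ ii), which I attack in its contrapositive form.

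The implication iii) $\Rightarrow$ i) is exactly Proposition~\ref{propFin}. For the equivalence of ii) and iii), I would argue purely by sign analysis of $f(x)=Ax^2+Bx+C$ (with $A>0$). Irreducibility forces $f(1)=A+B+C\neq 0$, since $1\in\Q$ is not a root, and $\Delta>0$ gives two distinct real roots $\beta_1<\beta_2$. The sign of $f(1)$ decides whether $1$ lies between the roots ($f(1)<0$, so $\beta_2>1$ automatically) or outside them ($f(1)>0$); in the latter case the abscissa of the vertex $-B/(2A)$ compared with $1$, i.e. the sign of $2A+B$, decides whether both roots exceed $1$ or both are below $1$. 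Running through the cases shows that ``at least one of $\beta,\beta'$ exceeds $1$'' is equivalent to ``$2A+B\leq 0$, or $2A+B>0$ and $A+B+C<0$'', which is ii). One checks along the way that $2A+B=0$ forces $f(1)=C-A<0$, so the boundary case causes no trouble.

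The substance is i) $\Rightarrow$ ii), which I prove as $\neg$ii) $\Rightarrow \neg$i). Negating ii) and using $A+B+C\neq 0$ gives $2A+B>0$ and $A+B+C>0$, which by the previous paragraph means both conjugates are smaller than $1$. Since irreducibility also rules out a zero root, $C\neq 0$, so the two roots are nonzero and I split into three cases by their signs. If both roots are positive, then $\beta$ is totally positive and the hypotheses $C>0$, $B<0$, $2A+B>0$, $A+B+C>0$ of Lemma~\ref{lemmaBetaTotPos} all hold, producing an $\alpha$ with $p_\beta(\alpha)=\infty$. If both roots are negative, then $B,C>0$, so $A\beta^2+B\beta+C=0$ is a partition of $0$ with non-negative coefficients; by the remark following Proposition~\ref{propFin} this forces $p_\beta$ to take only the values $0$ and $\infty$, and since $p_\beta(1)\geq 1$ we get $p_\beta(1)=\infty$.

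The remaining case --- roots of opposite sign, so $C<0$ and the positive root lies in $(0,1)$ --- is where I expect the real difficulty, since it is covered neither by Lemma~\ref{lemmaBetaTotPos} (whose proof needs $C>0$) nor by the zero-partition remark (the positive conjugate obstructs it). Here I would mimic the construction of Lemma~\ref{lemmaBetaTotPos}: using the key observation preceding it, it suffices to exhibit infinitely many $P(x)\in\Z[x]$ for which $P(x)(Ax^2+Bx+C)$ has all coefficients non-negative except a single one, which is a fixed negative integer, as each such $P$ then yields a distinct partition of a fixed positive integer. With the gadget $P(x)=\sum_{i=0}^{n} x^i$ the interior coefficients all equal $A+B+C>0$ and the leading one equals $A>0$, so the only possible obstructions are the two boundary coefficients $A+B$ and $B+C$ and the constant coefficient $C<0$; the main obstacle is to isolate a single negative coefficient by perturbing $P$ near its ends --- the analogue of the auxiliary index $i_0$ in Lemma~\ref{lemmaBetaTotPos} --- which I expect to require a case distinction on the sign of $B$ (equivalently, on whether the negative conjugate lies below $-1$). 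Carrying this out completes $\neg$ii) $\Rightarrow \neg$i), and hence the theorem.
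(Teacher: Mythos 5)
Your overall architecture is correct and close to the paper's: iii)$\Rightarrow$i) is Proposition~\ref{propFin}, your root-location argument for ii)$\Leftrightarrow$iii) is right (and your remark that irreducibility alone forces $f(1)=A+B+C\neq 0$ is simpler than the paper's computation showing $\Delta=(2A+B)^2$ when $A+B+C=0$), and your split of $\neg$ii)$\Rightarrow\neg$i) by the signs of the two roots is the same split as the paper's by the signs of $B$ and $C$: both roots positive is exactly the case $B<0$, $C>0$ (Lemma~\ref{lemmaBetaTotPos}), both roots negative is exactly $B>0$, $C>0$ (nontrivial partition of zero). The one genuine gap is the opposite-sign case $C<0$, which you leave as a programme: you expect to have to perturb $P_n(x)=\sum_{i=0}^n x^i$ near its ends so as to ``isolate a single negative coefficient,'' with a case distinction on the sign of $B$.

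That case in fact closes immediately, with no perturbation and no case distinction. Two observations. First, the obstruction you worry about at the moving position $n+1$ never occurs: $\neg$ii) gives $A+B+C>0$, and with $C<0$ this yields $A+B=(A+B+C)-C>0$, so every coefficient of $P_n(x)(Ax^2+Bx+C)$ is positive except possibly $B+C$ and $C$, which sit at the \emph{fixed} positions $1$ and $0$. Second, you do not actually need a single negative coefficient: the correspondence stated before Lemma~\ref{lemmaBetaTotPos} only requires the coefficient of $x^i$ in $P(x)(Ax^2+Bx+C)$ to be at least $-c_i$ for a fixed starting partition $\alpha=\sum c_i\beta^i$, so several negative coefficients are harmless as long as they occur at fixed positions with fixed bounds. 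Hence, taking $R(x)=\max(0,-(B+C))\,x-C$, the polynomials $Q_n=P_n\cdot(Ax^2+Bx+C)+R$ for $n\geq 2$ have non-negative integer coefficients and pairwise distinct degrees, and each satisfies $Q_n(\beta)=R(\beta)$; thus $p_\beta(R(\beta))=\infty$. This is in substance exactly what the paper does in its two cases $B\geq 0,C<0$ and $B<0,C<0$: its telescoping chains, e.g. $A\beta+F=A\beta^2+(A+B)\beta=A\beta^3+(A+B)\beta^2+(A+B-F)\beta=\cdots$ with $F=-C$, are precisely the partitions $Q_n$ above. So your route, once completed by the one-line inequality $A+B>0$, is sound and even unifies the paper's two remaining cases into one.
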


\begin{proof}
The conditions in ii) are easily seen to be equivalent to
\[
	\frac{-B+\sqrt{B^2-4AC}}{2A} > 1,
\]
which is equivalent to iii). That iii) implies i) follows from Proposition~\ref{propFin}. 

Thus it remains to prove that i) implies ii). Assume that ii) is not satisfied, i.e., $ 2A+B > 0 $ and $ A+B+C \geq 0 $. If $ A+B+C=0 $, then $1$ is a root of the polynomial $ Ax^2+Bx+C $ and the polynomial is not irreducible. Thus we can assume $ A+B+C > 0 $. By Lemma~\ref{lemmaBetaTotPos}, there exists $ \alpha \in \R $ such that $ p_\beta(\alpha) = \infty $.
\end{proof}
Let us now describe a key idea for counting partitions. Suppose that $ \alpha \in \R $ is expressed as
\[
	\alpha = c_j\beta^j+c_{j-1}\beta^{j-1}+\dots+c_1\beta+c_0,
\]
where $ c_i \in \Z_{\geq 0} $. If $ \alpha $ has another partition
\[
	\alpha = b_k\beta^k+b_{k-1}\beta^{k-1}+\dots+b_1\beta+b_0,
\]
then let $ Q $ and $ R $ denote the two polynomials
\begin{align*}
	Q(x)& = b_kx^k+b_{k-1}x^{k-1}+\dots+b_1x+b_0,\\
	R(x)& = c_jx^j+c_{j-1}x^{j-1}+\dots+c_1x+c_0.
\end{align*}
It follows that $ \beta $ is a root of $ Q(x)-R(x) $. If we suppose that $ Ax^2+Bx+C $ is irreducible, then it must divide $ Q(x)-R(x) $.  
Thus there exists a polynomial $ P(x) = a_nx^n+a_{n-1}x^{n-1}+\dots+a_1x+a_0 \in\Z[x]$ such that
\[
	P(x)(Ax^2+Bx+C) = Q(x)-R(x).
\]
The coefficient of $ x^i $ in $ Q(x)-R(x) $ is greater than or equal to $ -c_i $. Conversely, if we find a polynomial $ P(x) \in \Z[x] $ such that the coefficients of $ P(x)(Ax^2+Bx+C) $ satisfy this bound, we can reconstruct a partition of $ \alpha $.

Next, we investigate what happens in the situation when $ p_\beta(\alpha) $ is infinite for some $ \alpha $.
\begin{prop}
Let $ \beta $ be a real root of the irreducible polynomial $ Ax^2+Bx+C=0 $, $ A, B, C \in \Z $, $ A > 0 $.
\begin{enumerate}[i)]
\item If $ B\geq 0 $ and $ C>0 $, then $ p_\beta(\alpha) = \infty $ for all $ \alpha \in \Z[\beta] $.
\item If $ B \geq -C > 0 $, then $ p_\beta(\alpha) \in \{0, 1, \infty\} $ for every $ \alpha \in \R $. Moreover, if $ \alpha $ is of the form
\[
    \alpha = c_j\beta^j+c_{j-1}\beta^{j-1}+\dots+c_1\beta+c_0,
\]
where $ c_i \in \Z_{\geq 0} $, then $ p_\beta(\alpha) = 1 $ if and only if $ c_i \in \{0, \ldots, -C-1\} $ for every $ i $.
\end{enumerate}
\end{prop}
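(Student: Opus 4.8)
For part i), the plan is to combine the observation immediately preceding this proposition with an explicit construction. Since $B \ge 0$ and $C > 0$, the identity $\beta^2 + B\beta + C = 0$ is itself a non-trivial partition of $0$ with non-negative coefficients, so that remark already gives $p_\beta(\alpha) \in \{0, \infty\}$ for every $\alpha$; it therefore suffices to produce at least one partition for each $\alpha \in \Z[\beta]$. As $\beta$ is a quadratic integer, $\Z[\beta] = \{m + n\beta : m, n \in \Z\}$, and since the set of partitionable elements is closed under addition it is enough to partition every integer and every integer multiple of $\beta$. For integers I would use $-C = \beta^2 + B\beta$, a partition of $-C$; adding copies of it to copies of $\beta^0 = 1$ realizes any $m \in \Z$. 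For multiples of $\beta$ the direction $n \ge 0$ is free, and for the other I would compute $\beta^4 = (2BC - B^3)\beta - C(B^2 - C)$, which rearranges to the partition $-(B^3 - 2BC)\beta = \beta^4 + C(B^2 - C)$; here $B^3 - 2BC = B(B^2 - 2C) > 0$ and $C(B^2 - C) > 0$ because the positive discriminant forces $B^2 > 4C$. Combining a large multiple of this with copies of $\beta$ yields $n\beta$ for every $n \in \Z$, and the claim follows.

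For part ii), set $D = -C$, so the hypotheses read $0 < D \le B$ and the digit set is $\{0, \ldots, D-1\}$. The basic mechanism is the carry relation $D\beta^k = \beta^{k+2} + B\beta^{k+1}$, obtained by multiplying the minimal polynomial by $\beta^k$. If a partition has a coefficient $\ge D$ at some position $k$, replacing $D$ copies of $\beta^k$ by $\beta^{k+2} + B\beta^{k+1}$ produces a new partition whose coefficient sum increases by $B + 1 - D \ge 1$ and whose coefficient at position $k+1$ is now $\ge B \ge D$. Iterating, the coefficient sum grows without bound, so such an $\alpha$ has infinitely many partitions. This already shows that any partition containing a digit $\ge D$ forces $p_\beta(\alpha) = \infty$.

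The heart of the matter, and the step I expect to be the main obstacle, is the converse: a partition all of whose coefficients lie in $\{0, \ldots, D-1\}$ (a \emph{reduced} partition) is the \emph{only} partition of its value. Following the framework set up before the proposition, a second partition of $\alpha = \sum_{i=0}^j c_i\beta^i$ corresponds to a nonzero $P(x) = \sum_{i=0}^d p_i x^i \in \Z[x]$ (with $p_d \neq 0$) such that every coefficient of $P(x)(x^2 + Bx + C)$ is $\ge -c_i$, where $c_i \in \{0, \ldots, D-1\}$ for $i \le j$ and $c_i = 0$ for $i > j$. Writing the coefficient of $x^k$ as $-Dp_k + Bp_{k-1} + p_{k-2} \ge -c_k$, I would run a forward induction on $k$ starting at $k = 0$: since $B > 0$, $c_k \le D-1 < D$, and $p_{k-1}, p_{k-2} \le 0$ at each stage, the bound gives $Dp_k \le Bp_{k-1} + p_{k-2} + c_k < D$, hence $p_k \le 0$ for all $k$, and in particular $p_d \le -1$. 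But then the coefficient of $x^{d+1}$ equals $Bp_d + p_{d-1} \le -B \le -D < -(D-1) \le -c_{d+1}$, contradicting the required lower bound. Hence $P = 0$ and the reduced partition is unique.

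Assembling the pieces then gives the proposition. For $p_\beta(\alpha) \in \{0, 1, \infty\}$: if $\alpha$ has two distinct partitions, at least one of them must contain a digit $\ge D$ (otherwise the uniqueness of reduced partitions just established would force them equal), and the carry argument then yields infinitely many. The equivalence follows at once: a reduced representation is unique by the crux, so $p_\beta(\alpha) = 1$; conversely, if the (then unique) partition had a digit $\ge D$, the carry argument would contradict finiteness. I note that the whole argument for part ii) is purely polynomial and never uses which of the two real roots $\beta$ denotes, matching the symmetric phrasing of the statement; the one inequality requiring care is the digit bound $D \le B$, which is exactly what the hypothesis $-B-1 < C$ supplies.
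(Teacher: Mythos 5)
Your proposal is correct and takes essentially the same approach as the paper: in part i) you combine the nontrivial partition of $0$ with explicit partitions of negative generators of $\Z[\beta]$ (the paper uses the slightly simpler witnesses $-1=\beta^2+B\beta+(C-1)$ and $-\beta=\beta^3+B\beta^2+(C-1)\beta$ where you use $-C=\beta^2+B\beta$ and $-(B^3-2BC)\beta=\beta^4+C(B^2-C)$), and in part ii) your argument matches the paper's in substance step for step --- the carry mechanism based on $-C\beta^k=\beta^{k+2}+B\beta^{k+1}$ for digits $\geq -C$, and the polynomial-divisibility setup with the forward induction giving $p_k\leq 0$ followed by the contradiction at the coefficient of $x^{d+1}$. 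No gaps; the minor implicit point that the positive discriminant forces $B>0$ in part i) is immediate.
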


\begin{proof}
i) There is a nontrivial partition $ 0 = A\beta^2+B\beta+C $ of zero. Consequently, every $ \alpha $ which has a partition into non-negative powers of $ \beta $ can be partitioned in infinitely many ways. Since $ -1 = A\beta^2+B\beta+(C-1) $ and $ -\beta = A\beta^3+B\beta^2+(C-1)\beta $, an arbitrary element of $ \Z[\beta] $ has a partition.

ii) For every $ n \geq 3 $, we have
\[
	-C = (A\beta^2+B\beta+C)\sum_{i=0}^{n-2}\beta^i-C = A\beta^n+(A+B)\beta^{n-1}+\sum_{i=2}^{n-2}(A+B+C)\beta^i+(B+C)\beta.
\]
By the assumption $ B \geq -C $, the coefficients in the last expression are non-negative, hence $ -C $ can be partitioned in infinitely many ways. It follows that the same holds for $ -C\beta^n $ with $ n \geq 0 $. We see that if one of the coefficients satisfies $ c_i \geq -C $, then $ p_\beta(\alpha) = \infty $.

Conversely, suppose that $ c_i \in \{0, \ldots, -C-1\} $ for all $ i $. By the remarks preceding the proposition, the partitions of $ \alpha $ correspond to polynomials
\[
	(a_n x^n+a_{n-1}x^{n-1}+\dots+a_1x+a_0)(Ax^2+Bx+C)
\]
such that the coefficient of $ x^i $ is greater than or equal to $ -c_i $ for every $ i $. By assumption, $ -c_i>C $, hence
\[
    C < Ca_i+Ba_{i-1}+Aa_{i-2},
\]
where $ i = 0, 1, \ldots, n+2 $ (with the convention that $ a_i = 0 $ for $ i < 0 $ and $ i > n $).

Without loss of generality, assume $ a_0 \neq 0 $. Then $ C < Ca_0 $ implies $ a_0 < 0 $. We prove by induction that $ a_i < 0 $ for $ i = 0, 1, \dots, n $. Indeed, if $ a_{i-1} < 0 $ and $ a_{i-2} \leq 0 $, then
\[
	C < Ca_i+Ba_{i-1}+Aa_{i-2} \leq Ca_i-Ca_{i-1},
\]
and it follows that $ a_i-a_{i-1} \leq 0 $, hence $ a_i \leq a_{i-1} < 0 $. 

The coefficient of $ x^{n+1} $ equals $ Ba_n+a_{n-1} $, which we estimate as
\[
    Ba_n+a_{n-1} \leq -Ca_n \leq C.
\]
This is a contradiction with the fact that all the coefficients are greater than $ C $. This shows that all the $ a_i $ must be zero and $ \alpha $ has a unique partition.
\end{proof}
 
We can ask whether partitions into powers make sense also for non-real quadratic $ \beta $. The next proposition shows that the answer is negative because the partition function $ p_\beta $ is in this case trivial.
\begin{prop}
\label{propBetaComplex}
Let $ \beta\in\C\setminus\R $ be a complex root of a quadratic polynomial with $\Z$-coefficients. Then $0$ has a nontrivial partition, so that $ p_\beta(0) = \infty $. Thus $ p_\beta(\alpha) \in \{0, \infty\} $ for every $ \alpha \in \C $.
\end{prop}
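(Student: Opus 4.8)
The plan is to prove the first assertion---that $0$ has a nontrivial partition---since the second assertion then follows at once from the remark after Proposition~\ref{propFin}: any $\alpha$ admitting one partition admits infinitely many, obtained by repeatedly adding the relation $0=\sum_k a_k\beta^k$. Write $\beta=re^{i\theta}$ with $r>0$; since $\beta\notin\R$ we have $\theta\not\equiv 0\pmod{\pi}$. (It is irrelevant which of the two roots we call $\beta$: conjugating a relation $\sum_k a_k\beta^k=0$ with $a_k\in\Z_{\geq 0}$ yields the same relation for $\beta'=\bar\beta$.) Thus it suffices to produce non-negative integers $a_0,\dots,a_N$, not all zero, with $\sum_{k} a_k\beta^k=0$.

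First I would pass to a rational linear-algebra problem. As $\beta$ satisfies $A\beta^2+B\beta+C=0$, every power can be written as $\beta^k=p_k+q_k\beta$ with $p_k,q_k\in\Q$, and I set $v_k=(p_k,q_k)\in\Q^2$. Because $\beta\notin\R$, the pair $\{1,\beta\}$ is linearly independent over $\R$, so the map $(p,q)\mapsto p+q\beta$ is an $\R$-linear isomorphism $\R^2\to\C$ carrying $v_k$ to $\beta^k$. Consequently $\sum_k a_k\beta^k=0$ if and only if $\sum_k a_k v_k=0$, and the isomorphism identifies the convex hull of any finite set of powers $\beta^{k}$ with that of the corresponding $v_{k}\in\Q^2$. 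The task becomes: find finitely many indices $k_1,\dots,k_m$ and positive integers $a_{k_j}$ with $\sum_j a_{k_j}v_{k_j}=0$.

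The heart of the argument---and the step I expect to be the main obstacle---is the geometric claim that one can choose indices $k_1,\dots,k_m$ for which $0$ lies in the interior of the convex hull of $\{\beta^{k_1},\dots,\beta^{k_m}\}$ in $\C$; equivalently, the directions $e^{ik_j\theta}$ are not all contained in a closed half-plane through the origin. This I would verify by examining the arguments $k\theta\bmod 2\pi$ in two cases. If $\theta/\pi$ is irrational, then $\{k\theta\bmod 2\pi\}_{k\geq 0}$ is dense in $[0,2\pi)$, so one can pick three powers whose arguments fall near $0$, $2\pi/3$, $4\pi/3$, which evidently do not lie in any half-plane. If $\theta/\pi$ is rational, write $\theta=2\pi s/t$ with $\gcd(s,t)=1$; since $\beta\notin\R$ forces $\theta\not\equiv 0,\pi\pmod{2\pi}$, we get $t\geq 3$. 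As $k$ runs over $0,1,2,\dots$ the residue $ks\bmod t$ takes every value in $\{0,\dots,t-1\}$, so the arguments run through all $t$ equally spaced directions $2\pi j/t$. Their consecutive gaps equal $2\pi/t<\pi$, hence no arc of length $\pi$ can contain all of them; thus the directions are not in a closed half-plane. In either case $0$ is interior to the convex hull of the chosen powers.

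Finally I would convert this into an integer relation. Having $0$ in the interior of the convex hull gives a strictly positive real solution $\lambda_1,\dots,\lambda_m>0$ of $\sum_j\lambda_j v_{k_j}=0$. The full solution set is the kernel of the $2\times m$ matrix with the rational columns $v_{k_j}$, hence a rational subspace of $\R^m$; since it meets the open positive orthant (a relatively open condition) and rational points are dense in it, it contains a rational point with all coordinates positive. Clearing denominators yields positive integers $a_{k_j}$ with $\sum_j a_{k_j}v_{k_j}=0$, i.e.\ $\sum_j a_{k_j}\beta^{k_j}=0$, a nontrivial partition of $0$. This proves $p_\beta(0)=\infty$, and the claim $p_\beta(\alpha)\in\{0,\infty\}$ follows as noted. (Alternatively, one could invoke Dubickas' theorem~\cite{Du} cited above, since neither conjugate $\beta,\bar\beta$ of a non-real quadratic $\beta$ is a non-negative real number.)
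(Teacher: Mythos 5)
Your proof is correct, but it follows a genuinely different route from the paper's. The paper's argument is purely algebraic and much shorter: it only needs \emph{one} power $\beta^n$ with $\Re \beta^n<0$ (which exists by the same rotation-of-arguments reasoning as your half-plane analysis, in both the rational and irrational angle cases); then $q=-\Tr(\beta^n)$ and $r=\Nm(\beta^n)=|\beta^n|^2$ are positive rationals satisfying $\beta^{2n}+q\beta^n+r=0$, and clearing denominators gives an explicit three-term nontrivial partition of $0$. Your argument replaces this trace--norm trick with convex geometry: you need the stronger statement that the directions of the powers are not confined to a closed half-plane, and you must then pass from ``$0$ lies in the interior of the convex hull'' to a strictly positive \emph{rational} relation. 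Both of these extra steps are correct but are asserted rather than proved: the interior point does yield a strictly positive real solution (e.g.\ because $-\epsilon\sum_j v_{k_j}$ lies in the hull for small $\epsilon>0$, so one can add $\epsilon$ to all weights --- essentially Stiemke's lemma), and the kernel of a rational matrix is indeed a rational subspace in which rational points are dense, so meeting the open positive orthant forces a positive rational solution; a careful write-up should include these two lines, since they carry much of the weight. What your approach buys is generality: it is in essence a self-contained proof of the quadratic, non-real case of Dubickas' criterion cited in the paper, and its shape (embed powers into $\Q^d$, argue by convexity and rational density) is what one would try for higher-degree $\beta$, whereas the paper's trick exploits the quadratic structure, namely that the trace and norm of a suitable power immediately produce a relation with positive coefficients. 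Your closing remark that one could instead simply invoke Dubickas' theorem is also valid, since neither conjugate $\beta,\bar{\beta}$ of a non-real quadratic number is a non-negative real.
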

\begin{proof} 
By considering the argument of $\beta$ as a complex number, we see that there exists $ n \in \Z_{\geq 1} $ such that
\[
  \Tr(\beta^n)=  \beta^n+(\beta')^n = 2\Re{\beta^n} <0.
\]
Then $ \Tr(\beta^n)=-q$ 
for some rational number $ q>0 $. Moreover,
\[
\Nm(\beta^n)=	\beta^n(\beta')^n = r
\]
is also rational and positive. This gives us
\[
    \beta^{2n}+q\beta^n+r=0,
\]
and after clearing out the denominators we obtain a nontrivial partition of zero.
\end{proof}

\section{Range of the partition function}\label{s:3}
In this section, we prove Theorem~\ref{thm2}.
\begin{lemma}
\label{lemmaE}
Let $ \beta $ be a totally positive quadratic integer with minimal polynomial $ x^2-Ex+C $, $E,C\in\Z_{\geq 1}$, and let $n\geq 0$. If there exists a partition of $ E\beta^n $ different from $ E\beta^n $, then it is of the form
\[
    E\beta^n = \beta^{n+1}+a_n\beta^n+\dots+a_1\beta+a_0
\]
for some $ a_i \in \Z_{\geq 0} $.
\end{lemma}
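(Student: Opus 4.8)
The plan is to let $Q(x)=\sum_i b_i x^i\in\Z_{\ge0}[x]$ be a partition with $Q(\beta)=E\beta^n$ and $Q(x)\neq Ex^n$, and to pin down its degree and leading coefficient. Write $R(x)=Ex^n$ for the trivial partition. Since $\beta$ is a root of $Q-R$ and $x^2-Ex+C$ is the (irreducible) minimal polynomial, the quadratic divides $Q-R$; hence both conjugates satisfy $Q(\beta)=E\beta^n$ and $Q(\beta')=E(\beta')^n$, and $G:=Q-R$ has $\beta$ and $\beta'$ among its roots. Write $\beta_1=\max(\beta,\beta')$ and $\beta_2=\min(\beta,\beta')$. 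A preliminary computation I would record first is that $\beta_1>2$: since the discriminant $E^2-4C$ is positive and $C\ge1$ one checks $E\ge3$, and then $\beta_1=\frac{E+\sqrt{E^2-4C}}{2}>2$ is immediate for $E\ge4$, while for $E=3$ irreducibility forces $C=1$ and $\beta_1=\frac{3+\sqrt5}{2}>2$.

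For the upper bound I would evaluate at the larger conjugate. If $d=\deg Q$ with leading coefficient $b_d\ge1$, then because all coefficients are non-negative,
\[
\beta_1^{\,d}\le b_d\beta_1^{\,d}\le Q(\beta_1)=E\beta_1^{\,n}.
\]
Since $\beta_1>2>\beta_2$ we have $E=\beta_1+\beta_2<2\beta_1<\beta_1^2$, so $E\beta_1^{\,n}<\beta_1^{\,n+2}$ and therefore $d\le n+1$. Moreover, if $d=n+1$ then $b_{n+1}\le E\beta_1^{\,n}/\beta_1^{\,n+1}=E/\beta_1=1+\beta_2/\beta_1<2$, forcing $b_{n+1}=1$. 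Thus it only remains to exclude the possibility $\deg Q\le n$.

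This exclusion is the heart of the argument. Suppose $\deg Q\le n$, so that $G=Q-Ex^n$ has degree $\le n$; its coefficients are $G_j=b_j\ge0$ for $j<n$ and $G_n=b_n-E$, so every coefficient of $G$ is non-negative except possibly the leading one. The key observation is that such a polynomial has at most one positive real root: if its leading coefficient is also $\ge0$ then $G(x)>0$ for all $x>0$ (a nonzero sum of non-negative terms), while if the leading coefficient is negative, writing $G(x)=x^{\deg G}\bigl(G_{\deg G}+\sum_{j<\deg G}G_j x^{\,j-\deg G}\bigr)$ exhibits $G(x)/x^{\deg G}$ as a strictly decreasing function of $x>0$, which vanishes at most once. (Equivalently, Descartes' rule of signs gives at most one sign change.) But $G$ is divisible by $(x-\beta_1)(x-\beta_2)$ with $\beta_1\neq\beta_2$ both positive, so $G$ has at least two positive roots unless $G\equiv0$. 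Hence $G\equiv0$, i.e.\ $Q=Ex^n$, contradicting nontriviality.

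Combining the two bounds gives $\deg Q=n+1$ with leading coefficient $1$, which is exactly the asserted form $E\beta^n=\beta^{n+1}+a_n\beta^n+\dots+a_0$ with $a_i=b_i\in\Z_{\ge0}$. I expect the main obstacles to be the two points where total positivity is genuinely used: establishing $\beta_1>2$ (without which the size estimate no longer rules out terms of degree $n+2$), and the positive-root count for $G$ that eliminates all partitions of degree at most $n$.
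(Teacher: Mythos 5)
Your proof is correct and follows essentially the same route as the paper's: a size estimate at the larger conjugate $\beta_1>2$ to force $\deg Q\le n+1$ and leading coefficient $1$, and a Descartes'-rule (one-positive-root) argument, using that both conjugates are distinct positive roots of $Q-Ex^n$, to exclude $\deg Q\le n$. One small inaccuracy: the claim $\beta_1>2>\beta_2$ is unjustified and can fail under the lemma's hypotheses (e.g.\ $x^2-10x+22$ has smaller root $5-\sqrt{3}>2$); however, your chain $E=\beta_1+\beta_2<2\beta_1<\beta_1^2$ and the bound $E/\beta_1<2$ only need $\beta_2<\beta_1$, which holds since the discriminant is positive, so nothing breaks.
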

\begin{proof}
We assume that $ x^2-Ex+C $ is the minimal polynomial of $ \beta $, in particular irreducible. Since the two roots $ \beta $ and $ \beta' $ are real, the discriminant $ \Delta = E^2-4C $ is positive and not a square. It follows that $ E \geq 3 $ because $ C $ is also positive. We have $ \Delta \equiv 0, 1 \pmod{4} $, and therefore $ \Delta \geq 5$.

For any $ \alpha \in \Z[\beta] $, there is a one-to-one correspondence between partitions of $ \alpha $ into powers of $ \beta $ and of $ \alpha' $ into powers of $ \beta' $. Thus we can assume that $ \beta $ is the larger of the two conjugates, and hence
\[
	\beta = \frac{E+\sqrt{\Delta}}{2} \geq \frac{3+\sqrt{5}}{2} > 2.
\]
Consider a partition
\[
    E\beta^n = a_j\beta^j+a_{j-1}\beta^{j-1}+\dots+a_1\beta+a_0,\qquad a_j \neq 0.
\]
We treat first the case $ j \geq n+2 $. We established that $ \beta > 2 $, so $ \beta^2 > 2\beta $. But
\[
	E\beta^n \geq a_j\beta^j  \geq \beta^{n+2}
\]
implies $ \beta+\beta' = E \geq \beta^2 > 2\beta $, hence $ \beta' > \beta $, a contradiction.

Secondly, we suppose $ j = n+1 $ and show that $ a_{n+1} = 1 $. Otherwise $ a_{n+1} \geq 2 $ and
\[
	E\beta^n \geq a_{n+1}\beta^{n+1} \geq 2\beta^{n+1}
\]
implies $ \beta+\beta' = E \geq 2\beta $, which is impossible.

Finally, if $ j \leq n $ and the partition $ a_j\beta^j+a_{j-1}\beta^{j-1}+\dots+a_1\beta+a_0 $ is different from $ E\beta^n $, the polynomial
\[
    -Ex^n+a_jx^{j}+a_{j-1}x^{j-1}+\dots+a_1x+a_0
\]
is nonzero and has at most one sign change in the sequence of its coefficients. By Descartes' rule of signs, it has at most one positive root. But we know that $ \beta $ and $ \beta' $ are roots, a contradiction.
\end{proof}

We make some preliminary remarks before we prove Theorem~\ref{thm2}. The totally positive quadratic integer $ \beta $ is a root of the polynomial $ x^2-Ex+C $, where $ E = \Tr\beta $ and $ C = \Nm \beta $ both lie in $\Z_{\geq 1}$. First, we observe that, for a totally positive quadratic integer $ \beta $, the condition $ \beta > 1 $ and $ \beta' > 1 $ is equivalent to $ C \geq E $:
\[
	\beta>1\text{ and }\beta'>1 \Leftrightarrow (\beta-1)(\beta'-1)>0 \Leftrightarrow \beta\beta'-(\beta+\beta')+1>0 \Leftrightarrow C+1 > E.
\]

Secondly, the condition $ \beta < 2 $ or $ \beta'<2 $ is equivalent to $ C < 2E-4 $. Without loss of generality, we can assume that $ \beta $ is the larger of the two conjugates, hence $ \beta > 2 $ as we observed in the proof of Lemma~\ref{lemmaE}. Under this assumption,
\[
	\beta'<2 \Leftrightarrow (2-\beta)(2-\beta')<0 \Leftrightarrow 4-2(\beta+\beta')+\beta\beta' < 0 \Leftrightarrow C < 2E-4.
\]

Thus, Theorem~\ref{thm2} can be equivalently stated as follows: If $ C \geq E $, then $ p_\beta(E\beta^n) \geq n+1 $, with equality if and only if $ C < 2E-4 $; or $ n \in \{0,1\} $; or $ n = 2 $ and $ C < 2E $.

\begin{proof}[Proof of Theorem~$\ref{thm2}$]
The totally positive integer $ \beta $ is a root of the polynomial $ x^2-Ex+C $, where $ E = \Tr\beta $ and $ C = \Nm\beta $. We assume $ \beta > 1 $ and $ \beta' > 1 $ throughout the proof.
By the remarks preceding the proof, we know that $ C \geq E $.

Suppose that we have a non-trivial partition of $ E\beta^n $. By Lemma~\ref{lemmaE}, the partition is necessarily of the form $ Q(\beta) $, where
\[
    Q(x) = x^{n+1}+c_nx^n+\dots+c_1x+c_0
\]
is a polynomial with non-negative integer coefficients. Since $ \beta $ is a root of $ Q(x)-Ex^n $, there exists some polynomial $ P(x) \in \Z[x] $ such that $ P(x)(x^2-Ex+C) = Q(x)-Ex^n $. We let
\[
    P(x) = a_{n-1}x^{n-1}+a_{n-2}x^{n-2}+\dots+a_1x+a_0
\]
and obtain the following (both necessary and sufficient) conditions on the coefficients of $ P(x)(x^2-Ex+C) $:
\begin{equation}\label{eq:system}
	\begin{aligned}
	a_{n-1}& = 1\\
	-Ea_{n-1}+a_{n-2}& \geq -E\\
	Ca_i-Ea_{i-1}+a_{i-2}& \geq 0,\qquad i = 2, \ldots, n-1\\
	Ca_1-Ea_0& \geq 0\\
    Ca_0& \geq 0.
  \end{aligned}
\end{equation}

Now let us show~\eqref{eqPartTrace}, i.e., that $ p_\beta(E\beta^n) \geq n+1 $ for every $ n \geq 0 $. 

If we choose $ j \in \{0, \dots, n-1\} $ and let $ a_i = 1 $ for $ i = j, \dots, n-1 $, $ a_i = 0 $ for $ i = 0, \dots, j-1 $, all the inequalities \eqref{eq:system} are satisfied. The choice of $ j $ gives at least $ n $ non-trivial partitions of $ E\beta^n $. This proves~\eqref{eqPartTrace}.

\medskip

Secondly, we assume $ \beta < 2 $ or $ \beta' < 2 $ and show $ p_\beta(E\beta^n) = n+1 $ for every $ n \geq 0 $. Without loss of generality, we can suppose that $ \beta $ is the larger of the two conjugates, hence $ \beta' < 2 < \beta $. By the remarks preceding the proof, we know that $ C < 2E-4 $.

We claim that if $ i = 2, \ldots, n-1 $, then
\begin{equation}
\label{eqAi}
    a_{i-1}\geq 0\text{ and }a_{i-1} \geq 2a_i\quad\Longrightarrow\quad a_{i-2} \geq 2a_{i-1}.
\end{equation}
Indeed,
\[
    a_{i-2} \geq Ea_{i-1}-Ca_i \geq \left(E-\frac{C}{2}\right)a_{i-1} \geq 2a_{i-1},
\]
where the last inequality is equivalent to $ 2^2-E\cdot 2+C \leq 0 $, which is satisfied because $ \beta' < 2 < \beta $. This proves~\eqref{eqAi}.

Now suppose that $ a_i \leq 1 $ and $ a_{i-1} \geq 2 $ for some $ i = 1, \ldots, n-1 $. Then $ a_{i-1} \geq 2a_i $ and by a repeated application of~\eqref{eqAi}, we obtain $ a_0 \geq 2a_1 $. Since $ C < 2E $, it follows that
\[
    Ca_1-Ea_0 \leq (C-2E)a_1 < 0,
\]
a contradiction. In view of the fact that $ a_{n-1} = 1 $, we proved $ a_i \leq 1 $ for all $ i $.

Next, assume that $ a_i \leq 0 $ but $ a_{i-1}>0 $ for some $ i = 2, \ldots, n-1 $. Then
\[
    a_{i-2} \geq -Ca_i+Ea_{i-1} \geq Ea_{i-1} \geq 2a_{i-1},
\]
and we arrive at the same contradiction as before. Therefore, if $ a_i < 0 $ for some $ i $, then $ a_j \leq 0 $ for all $ j = 0, \ldots, i $. Let $ j $ be the smallest index for which $ a_j < 0 $. The coefficient of $ x^j $ in the polynomial $ P(x)(x^2-Ex+C) $ is
\[
    Ca_j-Ea_{j-1}+a_{j-2} = Ca_j < 0,
\]
a contradiction. This proves that all the coefficients $ a_i $ are non-negative. Additionally, if $ a_i = 0 $ for some $ i $, then $ a_j = 0 $ for every $ j = 0, \ldots, i $. The only remaining possibility is that there exists some index $ j $ in the set $ \{0, \ldots, n-1\} $ such that $ a_i = 1 $ for $ i = j, \ldots, n-1 $ and $ a_i = 0 $ for $ i = 0, \ldots, j-1 $. Hence the only partitions of $ E\beta^n $ are the $ n+1 $ partitions that we found before.

\medskip

If $ n =0,1 $, then Lemma \ref{lemmaE} quickly implies that $ p_\beta(E\beta^n) = 1,2 $ (respectively).

If $ n = 2 $, then the conditions \eqref{eq:system} on the coefficients $ a_i $ become
\begin{align*}
	a_1& = 1\\
	-Ea_1+a_0& \geq -E\\
	Ca_1-Ea_0& \geq 0\\
    Ca_0& \geq 0.
\end{align*}
If $ C < 2E $, the only solutions $ (a_1, a_0) $ are $ (1, 0) $ and $ (1, 1) $. On the other hand, if $ C \geq 2E $, there is an additional solution $ (a_1, a_0) = (1, 2) $.

\medskip

Finally, we prove that $ p_\beta(E\beta^n) > n+1 $ if $ n \geq 3 $ and the condition $ \beta<2 $ or $ \beta'<2 $ is not satisfied. In this case, we have $ C \geq 2E-4 $. If $ C = 2E-4 $, then the discriminant equals $ \Delta = E^2-4C = E^2-4(2E-4) = (E-4)^2 $, hence the polynomial is not irreducible. It remains to consider the case $ C \geq 2E-3 $. The condition $ \Delta = E^2-4C>0 $ implies $ E \geq 3 $. Using $ C \geq 2E-3 $, we get also $ E^2-8E+12 > 0 $, hence $ E > 6 $.

We show that the coefficients $ a_i $ can also take the values $ a_i = 1 $ for $ i = 2, \dots, n-1 $, $ a_1 = 2 $, and $ a_0 = 3 $. If $ i = 3, \dots, n-1 $, then
\[
	Ca_i-Ea_{i-1}+a_{i-2} \geq C-E+1 \geq 1,
\]
if $ i = 2 $, then
\[
	Ca_2-Ea_1+a_0 = C-2E+3 \geq 0
\]
by assumption, and
\[
	Ca_1-Ea_0 = 2C-3E \geq 2(2E-3)-3E = E-6 > 0.
\]
Thus inequalities \eqref{eq:system} are satisfied and  $ p_\beta(E\beta^n) \geq n+2 $.
\end{proof}

\begin{proof}[Proof of Corollary~$\ref{cor3}$]
In order to use Theorem~$\ref{thm2}$ (as restated just before its proof above), 
we will show there exist infinitely many elements $ \beta = a+b\sqrt{D} $ in $ \Z[\sqrt{D}] $ such that
\[
	2a \leq a^2-Db^2 < 4a-4.
\]

This is equivalent to
\[
	1 \leq (a-1)^2-Db^2 < 2a-3,
\]
and 
there are indeed infinitely many pairs $ (a, b) $ of positive integers satisfying these inequalities because Pell's equation $ x^2-Dy^2 = 1 $ has infinitely many solutions.
\end{proof}

Now we focus on the assumptions $ \beta > 1 $ and $ \beta' > 1 $ in Theorem~\ref{thm2} and show what happens when they are not satisfied.
Clearly, when $\beta$ a totally positive (quadratic) integer, then at least one of its conjugates is greater than 1, and so we can without loss of generality assume that $ \beta > 1 > \beta' $.

\begin{prop}
\label{propOpt1}
Let $ \beta $ be a totally positive quadratic integer and suppose that $ \beta > 1 > \beta' $. Then there is $ c_\beta >0 $ such that $ p_\beta((\Tr\beta)\beta^n)) \leq c_\beta $ for all $ n \in \Z_{\geq 0} $.
\end{prop}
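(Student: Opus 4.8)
The plan is to reuse the machinery developed for Theorem~\ref{thm2} and then exploit the extra room created by the hypothesis $ \Tr\beta > \Nm\beta $. Write $ E = \Tr\beta $ and $ C = \Nm\beta $, so that $ \beta $ has minimal polynomial $ x^2-Ex+C $ with $ E, C \in \Z_{\geq 1} $. The first observation, and the one that drives everything, is that irreducibility forces $ E-C \geq 2 $: indeed the value at $ 1 $ is $ 1-E+C $, and $ E-C = 1 $ would make $ 1 $ a root of the minimal polynomial, contradicting irreducibility, while $ E-C \geq 1 $ is the hypothesis. Hence $ 1-E+C \leq -1 < 0 $, a quantity I will exploit below.

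Next I would run the structural part of the proof of Theorem~\ref{thm2} verbatim. That argument only uses $ C < 2E $ (which holds here since $ C < E $) together with $ E \geq 3 $ and $ \beta > 2 $, all of which remain valid. Concretely: by Lemma~\ref{lemmaE} every non-trivial partition of $ E\beta^n $ has the form $ Q(\beta) $ with $ Q(x) = x^{n+1}+c_nx^n+\dots+c_0 $, $ c_i \in \Z_{\geq 0} $, and corresponds to a unique $ P(x) = a_{n-1}x^{n-1}+\dots+a_0 \in \Z[x] $ with $ P(x)(x^2-Ex+C) = Q(x)-Ex^n $. The chain of deductions in Theorem~\ref{thm2} --- using \eqref{eqAi} with $ c = 2 $ (legitimate since $ \beta' < 1 < 2 < \beta $) and the ensuing sign analysis --- then shows that $ a_{n-1} = 1 $, that each $ a_i \in \{0,1\} $, and that the zeros are downward closed. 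Thus $ P $ is a staircase: there is $ j $ with $ 0 \leq j \leq n-1 $ such that $ a_i = 1 $ for $ j \leq i \leq n-1 $ and $ a_i = 0 $ for $ i < j $.

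The new ingredient is to test which staircases actually yield a partition, i.e.\ have all $ c_i \geq 0 $. For the staircase with parameter $ j $ I would compute the coefficients of $ Q(x) = P(x)(x^2-Ex+C)+Ex^n $ directly. For every index $ i $ in the interior range $ j+2 \leq i \leq n-1 $ all three contributions are present, and one finds
\[
	c_i = [x^{i-2}]P - E\,[x^{i-1}]P + C\,[x^i]P = 1-E+C < 0,
\]
contradicting $ c_i \geq 0 $. Hence a valid partition can only come from a staircase whose interior range is empty, that is $ j+2 > n-1 $, i.e.\ $ j \geq n-2 $. Together with $ j \leq n-1 $ this leaves at most the two values $ j \in \{n-2, n-1\} $ (fewer for $ n \leq 1 $), so there are at most two non-trivial partitions of $ E\beta^n $, uniformly in $ n $. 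Adding the trivial partition $ E\beta^n $ gives $ p_\beta((\Tr\beta)\beta^n) \leq 3 $, so one may take $ c_\beta = 3 $.

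The whole argument is short; the only real obstacle is making sure the structural analysis of Theorem~\ref{thm2} does not secretly depend on the lower bound $ \Tr\beta \leq \Nm\beta $ of \eqref{eqCondTrace}. It does not: that bound enters only at the very end of Theorem~\ref{thm2}, precisely to guarantee $ 1-E+C \geq 0 $ so that every staircase is admissible and one recovers the full count $ n+1 $ in \eqref{eqPartTrace}. Here the inequality is reversed, which is exactly why the staircases collapse to boundedly many. This simultaneously confirms the claimed optimality, since it is the sign of $ 1-E+C $ that switches the behaviour of $ p_\beta((\Tr\beta)\beta^n) $ from linear growth to boundedness.
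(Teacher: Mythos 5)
Your proof is correct, but it takes a genuinely different route from the paper. The paper's own argument is much shorter: from $\Nm\beta \leq \Tr\beta - 1$ it deduces $\beta' < 1$, so $\gamma = 1/\beta$ has conjugate $\gamma' = 1/\beta' > 1$ and hence $p_{1/\beta}$ is everywhere finite by Theorem~\ref{thm5}; then, since Lemma~\ref{lemmaE} forces every non-trivial partition of $E\beta^n$ to have the shape $\beta^{n+1}+a_n\beta^n+\dots+a_0$, dividing through by $\beta^{n+1}$ injects these partitions into the partitions of $E/\beta$ into non-negative powers of $1/\beta$, giving the uniform bound $c_\beta = p_{1/\beta}(E/\beta)$ with no further computation. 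You instead rerun the structural analysis from the proof of Theorem~\ref{thm2} (staircase form of $P$) and then kill all staircases with non-empty interior via the coefficient $1-E+C \leq -1 < 0$, where the inequality $C \leq E-2$ follows from irreducibility exactly as you say. I checked that the structural part does transfer: the application of \eqref{eqAi} with $c=2$ needs $2 \in [\beta',\beta]$, i.e.\ $C \leq 2E-4$ (slightly more than the ``$C<2E$'' you cite), but this holds since $C \leq E-2 \leq 2E-4$ for $E \geq 2$, and your justification $\beta' < 1 < 2 < \beta$ is the correct one; the lower bound $E \leq C$ of \eqref{eqCondTrace} indeed enters Theorem~\ref{thm2} only when verifying that every staircase yields admissible coefficients. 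What your approach buys is an explicit absolute bound, $c_\beta = 3$, independent even of $\beta$ --- in fact you could sharpen it to $2$, since for $j = n-2$ the coefficient of $x^{j+1}$ in $Q$ is $C-E < 0$, so the unique non-trivial partition is $E\beta^n = \beta^{n+1}+C\beta^{n-1}$. What the paper's approach buys is brevity and reuse of its general finiteness theorem, at the cost of a non-explicit constant.
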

\begin{proof}
Consider the reciprocal $ \gamma = \frac{1}{\beta} $. The conjugate of $ \gamma $ satisfies $ \gamma' = \frac{1}{\beta'} > 1 $. By Theorem~\ref{thm5}, $ p_{\frac{1}{\beta}}(\alpha) < \infty $ for every $ \alpha \in \R $. However, by Lemma~\ref{lemmaE}, every partition of $ (\Tr\beta)\beta^n $ into non-negative powers of $ \beta $ except for $ (\Tr\beta)\beta^n $ itself is of the form
\[
	(\Tr\beta)\beta^n = \beta^{n+1}+a_n\beta^n+\dots+a_1\beta+a_0.
\]
If we divide by $ \beta^{n+1} $, we find
\[
	\frac{\Tr\beta}{\beta} = 1+a_n\frac{1}{\beta}+\dots+a_1\frac{1}{\beta^n}+a_0\frac{1}{\beta^{n+1}}.
\]
Thus $ p_\beta((\Tr\beta)\beta^n) \leq p_{\frac{1}{\beta}}(\frac{\Tr\beta}{\beta}) $.
\end{proof}

\section{Questions}\label{s:4}

We conclude with several open questions.
\begin{enumerate}
\item Is it possible to generalize Theorem~\ref{thm1} to algebraic numbers of higher degrees? 
I.e., does the converse to Proposition \ref{propFin} hold? 

While this paper was under review, Dubickas~\cite{Du2} resolved this question in the affirmative (even more generally, for arbitrary complex $\beta$).

\item Let $ \beta $ be a totally positive quadratic integer. Is it true that for every $ n \in \Z_{\geq 0} $, there exists $ \alpha \in \R $ such that $ p_\beta(\alpha) = n $? We proved this under additional assumptions in Theorem~\ref{thm2}. It would be interesting to know the answer at least for specific examples, say $ \beta = 2+\sqrt{2} $ (where it seems that the answer is perhaps ``yes'').
\item Does there exist a quadratic integer $ \beta $ which is not totally positive but the range of the partition function $ p_\beta $ is all of $ \Z_{\geq 0} $? And what happens in the case of algebraic numbers $ \beta $  of higher degrees?
\item Does $p_\beta$ satisfy any interesting congruence properties (at least concerning its parity)? Proposition \ref{propRec} may provide a possible starting point.
\end{enumerate}

\section*{Declarations}

\textbf{Data sharing:} Not applicable to this article as no datasets were generated or analysed during the current study.

\noindent \textbf{Competing interests:} The authors have no competing interests to declare that are relevant to the content of this article.

\noindent \textbf{Funding:} The authors were supported by Czech Science Foundation (GA\v{C}R) grant 21-00420M, Charles University Research Centre program UNCE/SCI/022, and GA UK No. 742120.

\end{document}